\documentclass[12pt,reqno]{amsart}
\usepackage{fullpage}
\allowdisplaybreaks
\usepackage{times}
\usepackage{colonequals}
\usepackage{amsmath,amssymb,amsthm,url}
\usepackage[utf8]{inputenc}
\usepackage[english]{babel}
\usepackage[alphabetic]{amsrefs}
\usepackage{bbm}
\usepackage{enumerate}
\usepackage{bm}
\usepackage{graphicx}
\usepackage{mathrsfs}
\usepackage[colorlinks=true, pdfstartview=FitH, linkcolor=blue, citecolor=blue, urlcolor=blue]{hyperref}
\newtheorem{thm}{Theorem}[section]
\newtheorem{lem}[thm]{Lemma}
\newtheorem{prop}[thm]{Proposition}

\theoremstyle{definition}

\newtheorem{rem}[thm]{Remark}

\newcommand{\bP}{\mathbb{P}}

\newcommand{\F}{\mathbb{F}}
\newcommand{\Fq}{\mathbb{F}_q}

\AtBeginDocument{%
	\def\MR#1{}
}

\begin{document}

\title{Existence of pencils with nonblocking hypersurfaces}
\author{Shamil Asgarli}
\address{Department of Mathematics and Computer Science \\ Santa Clara University \\ 500 El Camino Real \\ USA 95053}
\email{sasgarli@scu.edu}

\author{Dragos Ghioca}
\address{Department of Mathematics \\ University of British Columbia \\ 1984 Mathematics Road \\ Canada V6T 1Z2}
\email{dghioca@math.ubc.ca}

\author{Chi Hoi Yip}
\address{Department of Mathematics \\ University of British Columbia \\ 1984 Mathematics Road \\ Canada V6T 1Z2}
\email{kyleyip@math.ubc.ca}

\subjclass[2020]{Primary: 14N05, 14C21; Secondary: 14J70, 14G15, 51E21}
\keywords{Pencil of hypersurfaces, blocking set, finite field}

\maketitle

\begin{abstract}
We prove that there is a pencil of hypersurfaces in $\mathbb{P}^n$ of any given degree over a finite field $\mathbb{F}_q$ such that every $\mathbb{F}_q$-member of the pencil is not blocking with respect to $\mathbb{F}_q$-lines. 
\end{abstract}

\section{Introduction}

Let $\mathcal{P}$ represent some property a given algebraic hypersurface $X\subset \mathbb{P}^n$ may satisfy. For instance, the property $\mathcal{P}$ could be ``is smooth", ``is irreducible", or ``has no rational points." There are multiple perspectives in which a given property $\mathcal{P}$ may hold for a generic hypersurface. When the base field is a finite field $\mathbb{F}_q$, which will be the case for our consideration, there are at least three natural ways to express how a given property $\mathcal{P}$ may be generic: 

\begin{enumerate}
    \item \label{generic-condition-1} ($d$ is fixed) The natural density of hypersurfaces of degree $d$ over $\F_q$ which satisfies $\mathcal{P}$ tends to $1$ as $q\to\infty$, or at least, tends to some function $f(d)$ which is $1-o_d(1)$ as $d \to \infty$.
    \item \label{generic-condition-2} ($q$ is fixed) The natural density of hypersurfaces of degree $d$ which satisfies $\mathcal{P}$ tends to $1$ as $d\to\infty$, or at least, tends to some function $f(q)$ which is $1-o_q(1)$ as $q \to \infty$.
    \item \label{generic-condition-3} ($q,d$ are both fixed) The parameter space of hypersurfaces of degree $d$ has large-dimensional linear spaces whose $\Fq$-points all correspond to hypersurfaces which satisfy $\mathcal{P}$.
\end{enumerate}

The statements~\eqref{generic-condition-1} and \eqref{generic-condition-2} can be viewed as saying that hypersurfaces satisfying $\mathcal{P}$ are abundant on a ``global" level. In contrast, the statement~\eqref{generic-condition-3} is about the ``local" distribution of hypersurfaces satisfying $\mathcal{P}$. When $q$ and $d$ are sufficiently large, the statements~\eqref{generic-condition-1} and~\eqref{generic-condition-2} suggest, but do not directly imply the statement~\eqref{generic-condition-3}. 

When the property $\mathcal{P}$ stands for smoothness, Lang-Weil bounds~\cite{LW54} imply that \eqref{generic-condition-1} holds, and Poonen's theorem~\cite{P04} computes the exact density of smooth hypersurfaces over $\Fq$ and justifies \eqref{generic-condition-2}. For condition~\eqref{generic-condition-3}, the first two authors gave a positive answer to the existence of pencil of smooth hypersurfaces in \cite{AG23} for sufficiently large $q$ when $d,n$ are fixed; the existence of large-dimensional families of smooth hypersurfaces was essentially settled in a subsequent work \cite{AGR23} joint with Reichstein.

In this paper, we will investigate the condition when $\mathcal{P}$ represents the property that a hypersurface is nonblocking. We say that a hypersurface $X\subset\mathbb{P}^n$ is \emph{nonblocking with respect to lines} if there exists an $\Fq$-line $L$ such that $X\cap L$ has no $\Fq$-points. We will drop the part ``with respect to lines" for brevity, and call such a hypersurface \emph{nonblocking}. When $d$ is fixed, and $q \to \infty$, almost all curves are smooth, hence irreducible, by the Lang-Weil bounds. Thus, almost all curves are nonblocking by \cite{AGY23}*{Theorem 1.2}, settling~\eqref{generic-condition-1} in dimension two. In our previous paper \cite{AGY22b} we showed that most plane curves are nonblocking from an arithmetic statistics point of view, thus settling~\eqref{generic-condition-2} in dimension two. The goal of the present paper is to illustrate the abundance of nonblocking hypersurfaces over finite fields by examining \eqref{generic-condition-3}.

The concept of blocking plane curves was formally introduced in \cite{AGY23} with a view toward the rich interplay between finite geometry and algebraic geometry. One of the main tools in the study of blocking sets is to consider an associated algebraic curve (or a variety in general) and study its geometry \cite{SS98}. This paper focuses on the other direction: understanding when the points on a given algebraic variety form a blocking set. Recall that a set of points $B\subseteq \bP^n(\F_q)$ is a \emph{blocking set} (with respect to lines) if every $\F_q$-line meets $B$. A blocking set $B$ is \emph{trivial} if it contains all the $\F_q$-points of a hyperplane defined over $\F_q$, and is otherwise said to be \emph{nontrivial}. One question of particular interest is determining the minimum size of a nontrivial blocking set; we refer to \cite{BSS14} for a recent survey on related topics.

Our main theorem asserts the existence of completely nonblocking pencils. This case precisely corresponds to the case of $1$-dimensional (projective) linear subspaces in the statement~\eqref{generic-condition-3} where the property $\mathcal{P}$ stands for ``is nonblocking".

\begin{thm}\label{thm:main}
Let $n\geq 2$, $d\geq 2$, and $q$ be an arbitrary prime power. There exists a pencil $\mathcal{L}$ of degree $d$ hypersurfaces in $\mathbb{P}^n$ such that every $\Fq$-member of $\mathcal{L}$ is nonblocking.
\end{thm}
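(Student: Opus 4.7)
\emph{Reduction to $n = 2$.} First I reduce to the case $n = 2$ via a standard cone construction. Suppose $F, G \in \mathbb{F}_q[x_0, x_1, x_2]$ of degree $d$ define a pencil in $\mathbb{P}^2$ whose every $\mathbb{F}_q$-member admits an $\mathbb{F}_q$-line $L_{[\lambda:\mu]} \subset \mathbb{P}^2$ disjoint from its $\mathbb{F}_q$-points. Viewing $F, G$ as elements of $\mathbb{F}_q[x_0, \ldots, x_n]$ independent of $x_3, \ldots, x_n$ produces a pencil of degree-$d$ hypersurfaces in $\mathbb{P}^n$ whose $\mathbb{F}_q$-members are cones over the original plane curves with vertex $V(x_0, x_1, x_2) \cong \mathbb{P}^{n-3}$. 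Since $L_{[\lambda:\mu]}$ lies inside $\mathbb{P}^2 \subset \mathbb{P}^n$, its intersection with the $\mathbb{P}^n$-cone coincides with its intersection with the $\mathbb{P}^2$-curve, which is $\mathbb{F}_q$-point free. Thus the same line witnesses nonblocking in $\mathbb{P}^n$, and it suffices to treat $n = 2$.

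\emph{Strategy for $n = 2$.} I would fix an $\mathbb{F}_q$-point $P \in \mathbb{P}^2(\mathbb{F}_q)$ and use the $q+1$ $\mathbb{F}_q$-lines through $P$ as candidate avoiding lines. The key observation is that for any plane curve $C$ with $P \notin C$, each point of $C(\mathbb{F}_q)$ lies on a unique line through $P$; hence if $|C(\mathbb{F}_q)| \leq q$, at least one of the $q+1$ lines through $P$ avoids $C(\mathbb{F}_q)$. If the pencil has no $\mathbb{F}_q$-base point, exactly one member $C_{*}$ passes through $P$, and the goal becomes: choose $F, G$ so that every member $C \neq C_{*}$ satisfies $|C(\mathbb{F}_q)| \leq q$, and exhibit separately an $\mathbb{F}_q$-line avoiding $C_{*}$ (necessarily not through $P$). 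Natural building blocks for such $F, G$ are binary forms of degree $d$ with no $\mathbb{F}_q$-roots (which exist for every $d \geq 2$, for instance as norm forms from $\mathbb{F}_{q^d}/\mathbb{F}_q$), combined with an $x_2$-dependent term chosen to prevent any member from containing an $\mathbb{F}_q$-line as a component.

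\emph{Main obstacle and reductions.} The central difficulty is the explicit construction in $\mathbb{P}^2$. Simplistic pencils built only from binary forms in $x_0, x_1$ cannot work: the rational map $[x_0:x_1] \mapsto [F:G]$ necessarily sends $\mathbb{F}_q$-points to $\mathbb{F}_q$-points, so some member vanishes identically along an $\mathbb{F}_q$-line (the cone over its $\mathbb{P}^1$-roots) and is therefore blocking. A genuinely three-variable construction is needed to handle all $q+1$ members simultaneously, along with a careful accounting of the $\mathbb{F}_q$-points of each member. For large $d$, a product reduction may simplify matters: if a pencil $(F_0, G_0)$ of some degree $d_0 \geq 2$ satisfies the conclusion and $H \in \mathbb{F}_q[x_0, x_1, x_2]_{d - d_0}$ has $V(H)(\mathbb{F}_q) = \emptyset$, then $\{(\lambda F_0 + \mu G_0) \cdot H\}$ is a pencil of degree $d$ inheriting the avoiding lines of $(F_0, G_0)$. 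Such $H$ exists for $d - d_0 \geq 3$, for example as a Galois-orbit product of $\mathbb{F}_{q^{d-d_0}}$-lines with $\mathbb{F}_q$-linearly independent defining coefficients. Combining $d_0 \in \{2, 3, 4\}$ via this reduction would cover all $d \geq 2$, leaving only finitely many low-degree base cases that require an ad hoc explicit construction---the combinatorial heart of the proof.
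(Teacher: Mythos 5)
Your cone reduction from $n \geq 3$ to $n = 2$ is correct and is in fact a cleaner route than the paper's Section~2, which instead runs a separate interpolation argument for $n \geq 3$ (pinning a line $L$, a degree-$d$ Galois orbit on $L$, and invoking the existence of a nonblocking hypersurface containing $L$, which only works for $n \geq 3$). If $L \subset \mathbb{P}^2$ avoids $C(\F_q)$, then $L$ avoids the $\F_q$-points of the cone over $C$ in $\mathbb{P}^n$, since $L$ is disjoint from the vertex $\{x_0 = x_1 = x_2 = 0\}$; you are right about this. Your product reduction is also sound: for $e \geq 3$, a union of $e$ Galois-conjugate lines (no three concurrent) is a degree-$e$ curve with empty $\F_q$-locus, and multiplying a working pencil by such a form preserves the $\F_q$-point sets of every member. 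Your observation that a pencil of pure binary forms in two variables must contain a trivially blocking member is also correct and explains why a genuinely three-variable construction is needed.

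However, the proposal stops exactly at the point where the real work begins: you never produce an explicit pencil for $n = 2$, not even in the base cases $d \in \{2,3,4\}$, and you acknowledge this explicitly. What you offer is a \emph{framework} that would reduce the theorem to a finite construction, not a proof. The paper's actual content is the construction: take $F_d = y^d - z^d - z^{d-1}x$ and $G_d = z^d + g\,y^{d-1}x$ where $g = 1/\beta$ and $\beta \in \F_q^\ast$ is chosen outside the image of the (non-surjective) map $x \mapsto x^{2d-1} - x^{d-1}$. The paper then shows: (i) a Frobenius identity reduces to $d \leq q$; (ii) every $\F_q$-member is absolutely irreducible, using the choice of $\beta$ to kill any linear factor $z - ay$; (iii) every $\F_q$-member has at most $q+2$ rational points; and (iv) a geometrically irreducible curve of degree $d \leq q$ with at most $q+2 < q + \sqrt{q} + 1$ rational points cannot be a blocking set, by Bruen's lower bound for nontrivial blocking sets combined with B\'ezout to rule out the trivial case. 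Note that the paper's point count is $q+2$, not $q$, so it does not fit your ``lines through a fixed point $P$'' scheme; the irreducibility-plus-Bruen argument is what makes the slack of $+2$ harmless. Without the explicit pencil and the accompanying irreducibility and point-count analysis, the central claim remains unproved.
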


We remark that linear systems of hypersurfaces over finite fields have been investigated from a few different perspectives in the literature (see, for example, \cite{Bal07} and \cite{Bal09}). There is also a version of ``simultaneous" Bertini's theorem for a pencil of hypersurfaces over finite fields \cite{AG22FFA}.

\medskip

\textbf{Structure of the paper.} The proof of the main theorem will be separated into two cases, according to whether $n\geq 3$ or $n=2$. In Section~\ref{sect:high-dimensions}, we handle the case $n\geq 3$ by employing a geometric argument that takes advantage of the fact that there exist nonblocking hypersurfaces in $\mathbb{P}^n$ containing a line (see Lemma~\ref{lemma:good-hypersurface}). The proof for the $n=2$ case is more novel and requires a delicate choice of a pencil. In Section~\ref{subsect:past-results}, we briefly discuss how the past results in the literature imply only certain special cases of our main theorem when $n=2$. We then present a detailed proof of Theorem~\ref{thm:main} when $n=2$ in Section~\ref{subsect:proof-for-plane-curves}. Finally, in Section~\ref{sect:efficient}, we discuss how ``efficient" a completely nonblocking pencil of curves can be.

\section{Pencil of nonblocking hypersurfaces in high dimensions}\label{sect:high-dimensions}

The purpose of this section is to prove Theorem~\ref{thm:main} for $n\geq 3$. 

\medskip 

We start by proving the following auxiliary result, which guarantees the existence of a nonblocking hypersurface that contains a fixed $\Fq$-line. 

\begin{lem}\label{lemma:good-hypersurface}
Let $L$ be a fixed $\F_q$-line in $\mathbb{P}^n$ with $n\geq 3$. Given any $d\geq 2$, there exists a hypersurface $X\subset \mathbb{P}^n$ defined over $\Fq$ with degree $d$ such that $L\subset X$ and $X$ is not blocking.
\end{lem}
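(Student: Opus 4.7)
The plan is to construct $X$ explicitly in adapted coordinates. Normalize so that $L = V(X_2, X_3, \ldots, X_n)$; then the condition $L \subset X$ is equivalent to the defining polynomial $F$ of $X$ lying in the ideal $(X_2, \ldots, X_n)$. Since $n \geq 3$, I can pick an $\F_q$-line $L'$ disjoint from $L$, for instance $L' = V(X_0, X_1, X_4, \ldots, X_n)$, which is parametrized by $[X_2 : X_3]$. The line $L'$ will serve as the nonblocking witness, so the task reduces to producing a degree-$d$ form $F \in (X_2, \ldots, X_n)$ whose restriction to $L'$ has no $\F_q$-zero.

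The crucial observation is that the restriction map sending a degree-$d$ form $F \in (X_2, \ldots, X_n)$ to the binary form $F|_{L'} \in \F_q[X_2, X_3]_d$ is surjective: given any $h(X_2, X_3) = \sum_i c_i X_2^i X_3^{d-i}$, the decomposition
\[
h = X_2 \cdot \Bigl(\sum_{i\geq 1} c_i X_2^{i-1} X_3^{d-i}\Bigr) + X_3 \cdot \bigl(c_0 X_3^{d-1}\bigr)
\]
exhibits $h$ as $X_2 A + X_3 B$, which lies in $(X_2, X_3) \subseteq (X_2, \ldots, X_n)$ when viewed in $\F_q[X_0, \ldots, X_n]$, and restricts to $h$ along $L'$. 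Thus the problem collapses to choosing the right binary form $h$.

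It therefore suffices to exhibit, for each $d \geq 2$ and each prime power $q$, a binary form $h(X_2, X_3) \in \F_q[X_2, X_3]$ of degree $d$ with no $\F_q$-rational zero in $\mathbb{P}^1$. This is elementary: writing $d$ as a sum of $2$'s and $3$'s, one takes a product of one $\F_q$-irreducible binary form of each corresponding degree (irreducible quadratics and cubics in two variables exist over every $\F_q$). Since the resulting form has no $\F_q$-linear factor, it has no zero on $\mathbb{P}^1(\F_q)$. The main subtlety is this last existence verification, but it is genuinely routine; the real content of the lemma is the geometric observation that $n \geq 3$ affords enough room to place a line $L'$ skew to $L$ on which a prescribed nonvanishing form can be freely prescribed.
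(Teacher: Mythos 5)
Your proof is correct, and it takes a cleaner route than the paper. Both arguments share the same skeleton: pick an $\F_q$-line $L'$ disjoint from $L$ (possible because $n\geq 3$), then build a degree-$d$ form vanishing on $L$ whose restriction to $L'$ has no $\F_q$-zeros. But the constructions of the witnessing hypersurface differ in a substantive way. You observe that since $L = V(X_2,\ldots,X_n)$ and the two coordinates $X_2,X_3$ on $L'$ already lie in the ideal of $L$, you can take $F = h(X_2,X_3)$ for any degree-$d$ binary form $h$ with no $\F_q$-roots; this is a cone and vanishes on $L$ for trivial degree reasons, while $X\cap L'$ is literally $V(h)\subset\mathbb{P}^1$. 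The paper instead places $L$ at $V(x_0,\ldots,x_{n-2})$, sets $F = x_0^d + x_1 h(x_2,x_n)$ with $h$ engineered so that restricting along a diagonal line produces an irreducible binary form, and the bookkeeping needed to verify that the specialization has no $\F_q$-roots is more delicate. Your version collapses the construction to the single observation that any nonvanishing binary form works; the trade-off is that your hypersurface is a (typically reducible) cone, but the lemma and its downstream use in the $n\geq 3$ proof do not require irreducibility, so nothing is lost. One small remark: your "$d$ as a sum of $2$'s and $3$'s" step is slightly more elaborate than necessary -- since irreducible binary forms of every degree $d\geq 1$ exist over $\F_q$ (homogenize any monic degree-$d$ irreducible in $\F_q[t]$), you can take $h$ to be a single irreducible form, which is what the paper implicitly does. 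But your product construction is also valid.
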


\begin{proof}
Let $x_0, x_1, ..., x_n$ be the homogeneous coordinates on $\mathbb{P}^n$. Without loss of generality, we can assume that $L = \{x_0=x_1=...=x_{n-2}=0\}$. Let $X$ be a hypersurface defined by the equation $F(x_0, x_1, ..., x_n) = x_0^d + x_1 h(x_2, x_n)$, where $h(x_2, x_n)$ is a homogeneous polynomial of degree $d-1$ so that the specialized polynomial:
$$
F |_{x_2 = x_3 = ... = x_{n-1} = x_0, x_{n} = x_{1}} = x_0^d + x_1 h(x_0, x_1)
$$
has no $\F_q$-roots $[x_0:x_1]$ in $\mathbb{P}^1$. To see why such $h(x_2, x_n)$ exists, we can start with an \emph{irreducible} binary form of degree $d$ in $x_0$ and $x_1$ defined over $\F_q$,
$$
x_0^d + a_{1} x_0^{d-1} x_1 + a_2 x_0^{d-2} x_1^2 + ... + a_{d} x_1^{d}
$$
and re-group the terms,
$$
x_0^d + x_1(a_{1} x^{d-1} + a_2 x^{d-2} y + ... + a_{d} y^{d-1})
$$
and finally set $h(x_2, x_n) = a_{1} x_2^{d-1} + a_2 x_2^{d-2} x_n + ... + a_{d-1} x_2 x_n^{d-2} + a_{d} x_n^{d-1}$.

By construction, the hypersurface $X$ contains the line $L$, since substituting $x_0=x_1=...=x_{n-2}=0$ makes the equation of $F$ identically vanish. On the other hand, we claim that $X$ is not blocking. Indeed, consider the intersection of $X$ with the $\F_q$-line $L_{1}$ given by $\{x_2 = x_3 = ... = x_{n-1} = x_0\}\cap \{x_n=x_1\}$. The intersection $X\cap L_{1}$ is computed by specializing the defining equation $F(x_0, ..., x_n)=0$ of the hypersurface by setting the variables $x_2, ..., x_{n-1}$ equal to $x_0$, and setting the variable $x_n$ equal to $x_1$. By construction, this results in a binary form $x_0^d + x_1 h(x_0, x_1)$ which has no $\F_q$-roots in $\mathbb{P}^1$. In particular, $X\cap L_1$ has no $\F_q$-points, and therefore $X$ is not blocking.  \end{proof}

As another ingredient in our proof, we will rely on the following lemma regarding interpolation in algebraic geometry. We denote by $V_d$ the vector space of homogeneous forms of degree $d$ in $n+1$ variables $x_0, x_1, ..., x_n$. The projective space $\mathbb{P}(V_d)$ paramaterizes hypersurfaces of degree $d$ in $\mathbb{P}^n$.

\begin{lem}\label{lem:independence}
Fix a finite field $\mathbb{F}_q$, and consider any $k$ distinct  $\overline{\F_q}$-points $P_1, P_2, \ldots, P_k$ in $\mathbb{P}^n$. Let $W\subseteq V_d$ be the subspace (over $\overline{\Fq}$) corresponding to the hypersurfaces of degree $d$ passing through $P_1, P_2, \ldots, P_k$. If $d\geq k-1$, then $W$ has codimension $k$.  
\end{lem}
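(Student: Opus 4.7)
The plan is to recast the lemma as a surjectivity statement for an evaluation map. Choose affine representatives $\tilde P_i \in \overline{\F_q}^{\,n+1} \setminus \{0\}$ of each $P_i$, and consider the linear map
\[
\phi \colon V_d \longrightarrow \overline{\F_q}^{\,k}, \qquad F \longmapsto \bigl(F(\tilde P_1), \ldots, F(\tilde P_k)\bigr).
\]
By definition $W = \ker\phi$, so $W$ has codimension at most $k$, and equality of codimensions is equivalent to $\phi$ being surjective.

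To prove surjectivity, it suffices to produce, for each $i \in \{1,\dots,k\}$, a form $F_i \in V_d$ with $F_i(\tilde P_i) \neq 0$ and $F_i(\tilde P_j) = 0$ for every $j \neq i$; then the vectors $\phi(F_i)$ are scalar multiples of the standard basis vectors of $\overline{\F_q}^{\,k}$ and hence span it.

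The construction is the standard interpolation trick. For each $j \neq i$, since $P_i \neq P_j$, pick a linear form $\ell_{ij}$ with $\ell_{ij}(\tilde P_j) = 0$ and $\ell_{ij}(\tilde P_i) \neq 0$ (a hyperplane through $P_j$ but not through $P_i$). Also pick a linear form $m_i$ with $m_i(\tilde P_i) \neq 0$. Define
\[
F_i \;:=\; m_i^{\,d-k+1} \cdot \prod_{j \neq i} \ell_{ij}.
\]
The product has degree $k-1$, and the hypothesis $d \geq k-1$ ensures that the exponent $d-k+1$ is nonnegative, so $F_i$ is a well-defined homogeneous form of degree $d$. By construction $F_i(\tilde P_j) = 0$ for every $j \neq i$ while $F_i(\tilde P_i) \neq 0$, which is what was needed.

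The only real point of caution is the degree bookkeeping, which is precisely where the hypothesis $d \geq k-1$ is used; the existence of the separating linear forms $\ell_{ij}$ and $m_i$ is automatic, since any two distinct points of $\mathbb{P}^n$ can be separated by a hyperplane. Everything else is routine linear algebra.
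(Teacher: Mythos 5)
Your proof is correct and is essentially the argument the paper has in mind: the paper simply defers to \cite{AGY22b}*{Proposition 3.1} with the instruction to replace ``line'' by ``hyperplane,'' and your interpolation construction $F_i = m_i^{\,d-k+1}\prod_{j\neq i}\ell_{ij}$ is precisely that standard argument carried out explicitly in $\mathbb{P}^n$. The degree bookkeeping using $d\geq k-1$ and the reduction to surjectivity of the evaluation map are exactly where the hypothesis enters, as expected.
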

\begin{proof}
The proof of this lemma has already appeared in the special case of plane curves $(n=2)$ in our previous work~\cite{AGY22b}*{Proposition 3.1}. The same proof extends to the hypersurface case by replacing every occurrence of the word ``line" in that proof with the word ``hyperplane." We also mention that the result is known to the experts (see \cite{P04}*{Lemma 2.1} for proof using the cohomological language). 
\end{proof}

We now proceed to the proof of the main theorem for $n\geq 3$.

\begin{proof}[Proof of Theorem~\ref{thm:main} for $n \geq 3$]
Fix an $\F_q$-line $L$, say $L = \{x_0=x_1=...=x_{n-2}=0\}$. We have a subspace of $V_d$ given by:
$$
V_{d, L} = \{ f \in V_d \ | \ f \text{ identically vanishes on } L\}.
$$
In other words, the polynomials in $V_{d, L}$ correspond to hypersurfaces which contain the $\mathbb{F}_q$-line $L$. The codimension of $V_{d, L}$ inside $V_{d}$ is exactly $d+1$. This is because a homogeneous polynomial $f$ of degree $d$ vanishes along $L = \{x_0=x_1=...=x_{n-2}=0\}$ if and only if the $d+1$ coefficients in front of the monomials $x_{n-1}^i x_n^{d-i}$ (for $i=0, 1, ..., d$) all vanish. These $d+1$ coefficients are coordinates in the parameter space, so we obtain that $\operatorname{codim}(V_{d, L})=d+1$.

Now, let $Q_1\in L(\mathbb{F}_{q^d})$ such that $Q_1\notin L(\mathbb{F}_{q^r})$ for $r<d$. Consider the orbit of $Q$ with its conjugates under the Frobenius map $[x_0:...:x_n]\mapsto [x_0^q:...:x_n^q]$. This orbit forms a set $S=\{Q_1, Q_2, ..., Q_{d}\}$ of $d$ distinct points invariant under the Frobenius action. By Lemma~\ref{lem:independence}, we see that passing through these $d$ points $Q_1, ..., Q_d$ imposes linearly independent conditions in the space $V_d$. Thus, the vector subspace,
$$
W =  \{ f \in V_d \ | \ f \text{ vanishes at all of } Q_1, Q_2, ..., Q_d \}
$$
has codimension $d$ inside $V_d$. Note that $W$ is a subspace defined over $\Fq$, because $\{Q_1, Q_2, ..., Q_d\}$ forms a Galois orbit. It is also clear that $V_{d, L} \subseteq W$. Since $\dim(W) = \dim(V_{d, L})+1$, we see that $V_{d, L}$ is an $\mathbb{F}_q$-hyperplane inside $W$. After projectivizing, $\mathbb{P}(V_{d, L})$ is a hyperplane inside $\mathbb{P}(W)$. 

By Lemma~\ref{lemma:good-hypersurface}, there exists a nonblocking hypersurface $X\in V_{d, L}$ defined over $\F_q$. Consider the pencil $\mathcal{L}=\mathbb{P}^1$ spanned by the point $X$ and other $Y\in W \setminus V_{d, L}$ such that $Y$ is also defined over $\F_q$. By construction, $\mathcal{L}$ lies entirely inside $\mathbb{P}(W)$ and intersects $\mathbb{P}(V_{d, L})$ in exactly the point $X$. We claim that each $q+1$ distinct members of $\mathcal{L}$ is nonblocking. This assertion is true for the special hypersurface $X$ by construction. The other $q$ distinct $\mathbb{F}_q$-members of $\mathcal{L}$ are nonblocking hypersurfaces, because they intersect the $\mathbb{F}_q$-line $L = \{x_0=x_1=...=x_{n-2}=0\}$ in no $\mathbb{F}_q$-points. Indeed, they intersect $L$ in the non-$\mathbb{F}_q$-points $Q_1, Q_2, ..., Q_{d}$ and there are no other points along $L$ due to Bezout's theorem. We can apply Bezout's theorem because these $q$ members of the pencil (other than $X$) do not contain the line $L$, since they are not in $\mathbb{P}(V_{d, L})$.
\end{proof}

\section{Pencil of nonblocking plane curves}\label{sect:plane-curves}

This section will prove Theorem~\ref{thm:main} in the case $n=2$. Note that Lemma~\ref{lemma:good-hypersurface} fails trivially in this case, so we need to modify our approach. 

\subsection{Comparison with past results.} \label{subsect:past-results} As we will see in the next subsection, the proof of Theorem~\ref{thm:main} is considerably more intricate for the plane curve case compared to the higher-dimensional case. We give additional context to the difficulty of this problem. In particular, we explain how past results in the literature about blocking curves can only prove Theorem~\ref{thm:main} in certain special cases. 

\medskip

\textbf{Special Case 1.} Suppose $\operatorname{char}(\Fq)\neq 3$ and $q>d^6$. There exists a smooth pencil $\mathcal{L}$ with degree $d$ over $\F_q$ whenever $\operatorname{char}(\Fq)\neq 3$ by \cite{AGR23}*{Theorem 2}, and all $\Fq$-members in such a pencil are nonblocking curves provided that $q>d^6$ \cite{AGY23}*{Theorem 1.2}.

\medskip 

\textbf{Special Case 2.} Suppose $\operatorname{char}(\Fq)= 3$ and $q>C d^{12}$ for some absolute constant $C$. There exists a smooth pencil $\mathcal{L}$ with degree $d$ over $\F_q$ \cite{AG23}*{Theorem 1.3} provided that $q>Cd^{12}$, and all $\Fq$-members in such a pencil are nonblocking curves \cite{AGY23}*{Theorem 1.2}.

\medskip 

\textbf{Special Case 3.} Suppose $d\geq 3(q-1)$. We can partition $\bP^2(\F_q)$ into $q$ sets with size $q$ and a set with size $q+1$ such that not all $q+1$ points are collinear. Then none of these $q+1$ sets are blocking. By applying \cite{AGY22c}*{Proposition 2.1}, we obtain a pencil $\mathcal{L}$ such that the sets of $\Fq$-points on the $q+1$ members induce the same partition. Thus, all $\Fq$-members in such a pencil $\mathcal{L}$ are nonblocking curves.

\medskip 

We mentioned in the introduction that statements~\eqref{generic-condition-1} and \eqref{generic-condition-2} suggest that statement~\eqref{generic-condition-3} is likely to be true. The special cases above can be regarded as consequences of statements~\eqref{generic-condition-1} and \eqref{generic-condition-2}. However, a proof of Theorem~\ref{thm:main} in full generality cannot rely on statements~\eqref{generic-condition-1} and \eqref{generic-condition-2}.

\medskip 

\subsection{Proof of Theorem~\ref{thm:main} for $n=2$}\label{subsect:proof-for-plane-curves} We will work with a fixed degree $d\geq 2$ over a finite field $\Fq$. The key idea is to find a pencil such that each $\Fq$-member is irreducible and has at most $q+2$ distinct $\Fq$-points. We break the proof into several steps.

\medskip

\textbf{Step 1} (Construction of the pencil). Consider the map $f_d \colon \Fq\to\Fq$ given by $f_d(x)=x^{2d-1}-x^{d-1}$. Since $f_d(0)=f_d(1)=0$, the map $f_d$ is not injective and therefore not surjective. Let $\beta\in\F_q^{\ast}$ be any element not in the image of $f_d$. Let $g=1/\beta\in\F_q^{\ast}$, and consider the pencil $\langle F_d, G_d\rangle$ where:
$$
F_d(x,y,z) = y^d - z^d - z^{d-1} x, \quad \quad \text{and} \quad \quad G_d(x,y,z) = z^d + g y^{d-1} x.
$$
We claim that the pencil $\mathcal{L}=\langle F_d, G_d\rangle$ satisfies the desired properties of Theorem~\ref{thm:main}, that is, all the $q+1$ curves defined over $\Fq$ in $\mathcal{L}$ are nonblocking.

\medskip 

\textbf{Step 2} (Reduction to the low degree case).
For each $[s:t] \in \bP^1(\F_q)$, let $C_{d,[s:t]}$ denote the curve $sF_d-tG_d=0$. To check that every $\Fq$-member $\mathcal{L}$ is a nonblocking curve, let us explain why considering the case when $d\leq q$ suffices.  Recall that a curve $C$ is nonblocking if and only if $C(\F_q)$ is not a blocking set. Thus, it suffices to show $C_{d,[s:t]}(\F_q)=C_{q+d-1,[s:t]}(\F_q)$ for each $[s:t] \in \bP^1(\F_q)$. This follows from the shape of $F_d$ and $G_d$ and the identity $b^{r+q-1}=b^r$ that holds for any $r\geq 1$ and $b\in\Fq$. If we can show that the conclusion holds for the pencil $\langle F_d, G_d\rangle$ of degree $d$, then it will also hold for the pencil $\langle F_{q+d-1}, G_{q+d-1}\rangle$ of degree $d+q-1$. Consequently, we may assume $d\leq q$ for the rest of the proof.

\medskip 

\textbf{Step 3} (Irreducibility of the curves). For every $[s:t]\in\bP^1(\Fq)$, we claim that the polynomial 
\begin{equation}\label{irreducibility-1}
sF_d - t G_d = s(y^d - z^d + z^{d-1}x) - t(z^d + g y^{d-1}x) 
\end{equation}
is absolutely irreducible, that is, irreducible in $\overline{\F_q}[x,y,z]$. When $s=0$ or $t=0$, the result easily follows. We will assume $s\neq 0$ and $t\neq 0$. After scaling by $1/s$, it suffices to show that the polynomial
\begin{equation}\label{irreducibility-2}
y^d - z^d + z^{d-1}x - t(z^d + g y^{d-1}x) 
\end{equation}
is irreducible in $\overline{\F_q}[x,y,z]$ for every $t\in\Fq^{\ast}$. Assume, to the contrary, that the polynomial in \eqref{irreducibility-2} splits nontrivially. Since it has degree $1$ in $x$, we must have a factor $z-ay$ (using homogeneity of the polynomial in variables $y$ and $z$) for some $a\in\overline{\Fq}$. Substituting $z=ay$ into the polynomial \eqref{irreducibility-2} and collecting  coefficients of $xy^{d-1}$ and $y^d$, respectively, we obtain two relations:
\begin{align}
a^{d-1} &= t g \label{t-relation-1} \\
1-a^d - t a^d &= 0. \label{t-relation-2}
\end{align}
From \eqref{t-relation-2}, we get $a^d = \frac{1}{1+t}$ which implies $a^d \in\Fq^{\ast}$ (note $t=-1$ is not possible in view of \eqref{t-relation-2}). Combining $a^d \in\Fq^{\ast}$ with \eqref{t-relation-1}, we deduce that $a\in\Fq^{\ast}$. On the other hand, \eqref{t-relation-2} implies
\begin{equation}\label{t-relation-3}
t = \frac{1-a^d}{a^d}.
\end{equation}
Combining \eqref{t-relation-1} and \eqref{t-relation-3}, we obtain
$$
a^{d-1} = \frac{1-a^d}{a^d} \cdot g \ \ \Longrightarrow \ \ \frac{1}{g} = \frac{1-a^d}{a^{2d-1}} = \left(\frac{1}{a}\right)^{2d-1} - \left(\frac{1}{a}\right)^{d-1}.
$$
Therefore, $\frac{1}{g} = \beta$ is in the image of the function $f_d(x) = x^{2d-1}-x^{d-1}$. This last sentence contradicts the choice of $\beta$, and we conclude the (absolute) irreducibility of the polynomial in \eqref{irreducibility-1}. We have established that each $\Fq$-member of the pencil $\langle F_d, G_d\rangle$ is an irreducible curve.

\medskip 

\textbf{Step 4} (Counting the number of $\Fq$-points). Let $[s:t]\in\bP^1(\Fq)$. We claim that there are at most $q+2$ distinct $\Fq$-points on the curve defined by:
$$
C_{d,[s:t]}:sF_d - t G_d = s\cdot (y^d - z^d + z^{d-1}x) - t\cdot (z^d + g y^{d-1}x) = 0. 
$$
We will now analyze several cases, depending on whether or not $s$ or $t$ are zero.

When $t=0$, we have the curve $C_0\colonequals C_{d, [1:0]}$ defined by $y^d-z^d+z^{d-1}x=0$. We claim that $C_0$ has $q+1$ distinct $\F_q$-points. To count the number of $\Fq$-points of $C_0$, we consider two cases:  

\textbf{Case 1.} $z\neq 0$. In this case, for any $(y, z)\in\Fq\times\Fq^{\ast}$, we can uniquely solve for $x\in\Fq$. For any $r\in\Fq^{\ast}$, the pair $(ry, rz)$ results in $rx$, leading to the same point $[rx:ry:rz]=[x:y:z]$ in $\mathbb{P}^2$. Thus, there are $\frac{q\cdot (q-1)}{q-1}=q$ distinct $\Fq$-points on $C_0$ with $z\neq 0$. 

\textbf{Case 2.} $z=0$. In this case, $y=0$ which means that $[x:y:z]=[1:0:0]$ is one additional $\Fq$-point in $C_0$. 

\medskip

When $s=0$, we have the curve $C_{\infty}\colonequals C_{d, [0:1]}$ defined by $z^d+g y^{d-1}x=0$. The similar analysis applies: when $y\neq 0$, we can solve uniquely for $x\in\Fq$, giving us a total of $\frac{q\cdot (q-1)}{q-1}=q$ points. When $y=0$, we get an additional $\Fq$-point $[1:0:0]$ on $C_{\infty}$. Thus, $C_{\infty}$ has $q+1$ distinct $\Fq$-points as well. 

\medskip

Next, we focus on the case when $s\neq 0$ and $t\neq 0$. After scaling, it is enough to work with the curves defined by $C_{d, [1:t]}$, which we denote by $C_{t}$ for simplicity. We have,
\begin{equation}
C_{t} \colon \ y^d - z^d + z^{d-1}x - t(z^d + g y^{d-1}x) = 0 
\end{equation}
where $t\in\Fq^{\ast}$.  We rewrite,
\begin{equation}\label{curve:bounding-rat-points}
C_{t} \colon \ y^d - z^d - t z^d + (z^{d-1} - t g y^{d-1})x = 0. 
\end{equation}

To count the number of $\Fq$-points $[x:y:z]$ on $C_{t}$, we consider two cases:

\textbf{Case 1.} $z^{d-1} -
t g y^{d-1} \ne 0$. In this case, the number of possible pairs is $(y,z)\in \Fq\times \Fq$ is at most $q^2 - 1$. For each such pair, we can solve for $x$ uniquely in \eqref{curve:bounding-rat-points}. Since $(ry, rz)$ results in $rx$ for any $r\in\Fq^{\ast}$, which corresponds to $[rx:ry:rz]=[x:y:z]$ in $\mathbb{P}^2$, the number of $\Fq$-points on $C_{t}$ in this case is at most $\frac{q^2-1}{q-1}=q+1$. 

\textbf{Case 2.} $z^{d-1} -
t g y^{d-1} = 0$. In this case, we must also have $y^d-z^d-tz^d=0$. The only additional $\Fq$-point we get on $C_t$ is $[1:0:0]$. Indeed, the analysis in \textbf{Step 3} shows that the only solution to the system of two  equations:
$$
y^d - z^d - t z^d = 0, \quad \text{and} \quad z^{d-1} - t g y^{d-1} = 0
$$
in $\overline{\F_q}\times \overline{\F_q}$ is $(y,z)=(0,0)$, for otherwise $C_{t}$ is not geometrically irreducible.

We conclude that $C_{t}$ has at most $q+2$ distinct $\Fq$-points for every $t\in\Fq^{\ast}$.

\medskip

\textbf{Step 5} (Conclusion). Let $[s:t] \in \bP^1(\F_q)$. We have shown that $C_{d,[s:t]}$ is geometrically irreducible and $|C_{d,[s:t]}(\F_q)| \leq q+2$. For the sake of contradiction, suppose that $C_{d,[s:t]}$ is blocking; then $C_{d,[s:t]}(\F_q)$ must be a trivial blocking set, as otherwise 
$|C_{d,[s:t]}(\F_q)| \geq q+\sqrt{q}+1$ \cite{B71}. It follows that  $C_{d,[s:t]}$ contains all the $q+1$ distinct $\F_q$-points of a line $L$ defined over $\F_q$. However, since $C_{d,[s:t]}$ is geometrically irreducible, $C_{d,[s:t]} \cap L$ has at most $d \leq q$ intersection points by B\'ezout's theorem, a contradiction. This proves that all curves defined over $\Fq$ in the pencil $\mathcal{L}=\langle F_d, G_d\rangle$ are nonblocking.

\begin{rem} If we allow the pencil to have at most one blocking curve, the problem becomes much easier. Indeed, for each $d \geq 2$ and $q$, we can give an explicit construction of a ``near miss" nonblocking pencil as follows. Let $h(t)\in\Fq[t]$ be an irreducible polynomial of degree $d$. Consider the pencil $\mathcal{L}=\langle F, G\rangle$ given by,
$$
F(x,y,z)= x^d \quad\quad \text{and} \quad\quad G(x,y,z)=z^d h(y/z). 
$$
Then the $\Fq$-member corresponding to $F=0$ is trivially blocking, but the other $q$ distinct $\Fq$-members of $\mathcal{L}$ are not blocking (since their intersection with the line $x=0$ has no $\Fq$-points). 
\end{rem}

\section{Efficient nonblocking pencils} \label{sect:efficient}

We have proved in Theorem~\ref{thm:main} the existence of a pencil of plane curves whose $\Fq$-members are nonblocking, that is, every $\Fq$-member admits a \emph{skew} $\Fq$-line (namely, a line which meets the curve at no $\Fq$-points). It is natural to ask how many skew $\Fq$-lines must be present to ensure the pencil is completely nonblocking. It is impossible to have two $\Fq$-lines $L_1$ and $L_2$ such that every $\Fq$-member is skew to either $L_1$ or $L_2$; indeed, the intersection point $P\in L_1\cap L_2$ would be contained in some $\Fq$-member of the pencil. Next, we show that it is possible to have three $\Fq$-lines $L_1, L_2$, and $L_3$ such that every $\Fq$-member is skew to one of $L_1, L_2$, and $L_3$. In other words, for the ``most efficient" pencil of nonblocking curves, three lines are sufficient to witness that all curves in the pencil are nonblocking. 

We begin with the following criterion for a Fermat-type curve to be nonblocking.

\begin{lem}\label{lem:Fermat}
Assume the characteristic of the field $\Fq$ is not $2$. If $(q-1)/\gcd(d,q-1)$ is odd and $a,b,c\in\Fq^{\ast}$, then the curve $C$ defined by 
$$ax^d+by^d+cz^d=0$$
is nonblocking. Moreover, one of the three lines $x=0$, $y=0$, $z=0$ is a skew line to $C$.
\end{lem}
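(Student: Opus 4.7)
The plan is to analyze, for each of the three coordinate lines $x=0$, $y=0$, $z=0$, exactly when it is skew to $C$, and then use a product argument to show that at least one of them must be skew under the given hypothesis.

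First, I would observe that the line $x=0$ meets $C$ at the $\F_q$-points $[0:y:z]$ satisfying $by^d+cz^d=0$, i.e.\ $(y/z)^d = -c/b$ (after noting $z\neq 0$, since $y=z=0$ is not a projective point). Hence $x=0$ is skew to $C$ if and only if $-c/b$ is not a $d$-th power in $\F_q^{\ast}$. Symmetrically, $y=0$ is skew to $C$ iff $-a/c \notin (\F_q^{\ast})^d$, and $z=0$ is skew to $C$ iff $-b/a \notin (\F_q^{\ast})^d$.

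Next, I would argue by contradiction: suppose none of the three lines is skew to $C$. Then each of $-c/b$, $-a/c$, $-b/a$ is a $d$-th power in $\F_q^{\ast}$. Since the set of $d$-th powers is a subgroup of $\F_q^{\ast}$, their product
\[
\left(-\frac{c}{b}\right)\left(-\frac{a}{c}\right)\left(-\frac{b}{a}\right) = -1
\]
must also be a $d$-th power in $\F_q^{\ast}$.

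Finally, I would recall that the subgroup $(\F_q^{\ast})^d$ has index $\gcd(d,q-1)$ in $\F_q^{\ast}$, so it equals the unique subgroup of order $(q-1)/\gcd(d,q-1)$. Since $\mathrm{char}(\F_q)\neq 2$, the element $-1$ has order exactly $2$ in $\F_q^{\ast}$, so $-1\in (\F_q^{\ast})^d$ if and only if $2\mid (q-1)/\gcd(d,q-1)$. Our hypothesis says this quotient is odd, so $-1$ is not a $d$-th power, contradicting the conclusion of the previous step. Therefore at least one of the three coordinate lines is skew to $C$, which both witnesses nonblocking and gives the ``moreover'' statement.

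The proof is essentially a direct computation, with no real obstacle; the only subtle point is just the parity characterization of when $-1$ is a $d$-th power, which is standard from the cyclic structure of $\F_q^{\ast}$.
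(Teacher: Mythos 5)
Your proof is correct and follows essentially the same approach as the paper: characterize skewness of each coordinate line via $d$-th power conditions on $-c/b$, $-a/c$, $-b/a$, multiply to get $-1$, and derive a contradiction from the parity hypothesis. The only cosmetic difference is that the paper phrases the argument in terms of $d'$-th powers with $d'=\gcd(d,q-1)$ from the start, whereas you work with $d$-th powers and invoke the index-$\gcd(d,q-1)$ fact at the end; these are the same computation.
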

\begin{proof}
Let $d'=\gcd(q-1,d)$. Note $d$-th powers in $\F_q^*$ are essentially $d'$-th powers in $\F_q^*$. If the curve $C$ meets $x=0$, then $by^d+cz^d=0$, which implies that $-b/c$ is a $d'$-th power. Similarly, if the curve $C$ meets both $y=0$ and $z=0$, then $-c/a$ and $-a/b$ are also 
$d'$-th powers. In particular, $-1=(-a/b) \cdot (-b/c) \cdot (-c/b)$ is a $d'$-th power, that is, $(-1)^{(q-1)/d'}=1$, contradicting our assumption that $(q-1)/d'$ is odd and that ${\rm char} (\Fq)\ne 2$.
\end{proof}

\begin{prop}
Assume the characteristic of the field $\Fq$ is not $2$. If $\gcd(q-1,d) \geq 3$ and $(q-1)/\gcd(q-1,d)$ is odd, then there exists a pencil of nonblocking curves of degree $d$ over $\F_q$ witnessed by the lines $x=0,y=0$ and $z=0$.
\end{prop}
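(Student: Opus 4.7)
The plan is to construct the pencil entirely inside the space of Fermat-type forms $ax^d + by^d + cz^d$, so that Lemma~\ref{lem:Fermat} applies directly to every member with $abc \neq 0$, and the (at most three) degenerate two-term members can be handled one by one with a separate ad hoc check.

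Let $d' = \gcd(q-1,d)$ and let $H \subseteq \F_q^*$ denote the multiplicative subgroup of $d'$-th powers, which has index $d' \geq 3$. Since there are at least $d' - 1 \geq 2$ non-trivial cosets of $H$, I would first choose $\beta, \gamma \in \F_q^*$ lying in two distinct non-trivial cosets, so that simultaneously $\beta \notin H$, $\gamma \notin H$, and $\beta/\gamma \notin H$. Then I would take the pencil $\mathcal{L} = \langle F, G \rangle$ with $F = \beta x^d - y^d$ and $G = \gamma x^d - z^d$.

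A general $\F_q$-member is $sF + tG = (s\beta + t\gamma)x^d - s y^d - t z^d$, a Fermat-type form with coefficient triple $(s\beta + t\gamma,\, -s,\, -t)$. For every $[s:t] \in \bP^1(\F_q)$ outside the three values $[1:0]$, $[0:1]$, and $[\gamma:-\beta]$ (which are distinct since $\beta,\gamma \in \F_q^*$), all three coefficients are nonzero, and Lemma~\ref{lem:Fermat} directly witnesses one of $\{x=0,\,y=0,\,z=0\}$ as a skew line. For the three remaining degenerate members---namely $F$, $G$, and (up to scaling) $\gamma y^d - \beta z^d$---I would argue separately that the ``missing'' coordinate line is skew. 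For example, the intersection of $F = \beta x^d - y^d$ with $\{z=0\}$ is cut out by $(y/x)^d = \beta$, which has no $\F_q$-solution precisely because $\beta \notin H$ (using that an element of $\F_q^*$ is a $d$-th power iff it is a $d'$-th power). The cases of $G$ (using line $y=0$) and $\gamma y^d - \beta z^d$ (using line $x=0$) are entirely analogous, relying on $\gamma \notin H$ and $\beta/\gamma \notin H$ respectively.

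The main conceptual point---more than any technical obstacle---is the recognition that the single combinatorial condition ``$\beta$ and $\gamma$ lie in two distinct non-trivial cosets of $H$'' is calibrated to simultaneously handle the three degenerate members by assigning each one to a different coordinate line. Once this is observed, the verification is routine, and no additional hypothesis on $q$ is needed beyond those already assumed.
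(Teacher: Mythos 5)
Your proof is correct and takes essentially the same approach as the paper: both confine the pencil to Fermat-type forms $ax^d+by^d+cz^d$ so that Lemma~\ref{lem:Fermat} handles all members with $abc\neq 0$, and both dispose of the three degenerate two-term members by a non-residue check against a different coordinate line. The paper uses $F=x^d+y^d$, $G=y^d+rz^d$ with $r,-r$ non-$d'$-th-powers (invoking $-1\notin H$ explicitly), while you use $F=\beta x^d-y^d$, $G=\gamma x^d-z^d$ with $\beta,\gamma$ in distinct nontrivial cosets; these are interchangeable parametrizations of the same idea.
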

\begin{proof}
Let $d'=\gcd(q-1,d)$. Since $(q-1)/d'$ is odd, it follows that $-1$ is not a $d'$-th power in $\F_q^*$. Since $d' \geq 3$, we can always pick an element $r$ in $\F_q^*$ such that both $r$ and $-r$ are \emph{not} $d'$-th powers in $\F_q^*$.

Let $F=x^d+y^d$, and $G=y^d+rz^d$. Since $-1$ is not a $d'$-th power, the $\Fq$-line $z=0$ is skew to the curve $\{F=0\}$. Similarly, since $-r$ is not a $d'$-th power, the line $x=0$ is skew to the curve $\{G=0\}$. In particular, these two curves $\{F=0\}$ and $\{G=0\}$ are nonblocking.  Moreoever, as $r$ is not a $d'$-th power, the curve $\{F-G=0\}$ defined by the polynomial $(x^d+y^d)-(y^d+rz^d)=x^d-rz^d$ is also nonblocking, since it admits $y=0$ as a skew line. In fact, each of these $3$ curves only has one $\F_q$-point. 

Consider the pencil $\mathcal{L}=\langle F, G\rangle$. Other than the $3$ special $\Fq$-members mentioned above, all other curves in $\mathcal{L}$ have the form $ax^d+by^d+cz^d=0$, where $a,b,c$ are nonzero. We can then apply Lemma~\ref{lem:Fermat} to get the desired conclusion. 
\end{proof}

\begin{rem}
The above construction does not generalize to all $q$ and $d \geq 2$. Indeed, when $q$ is a square, the Hermitian curve 
$x^{\sqrt{q}+1} + y^{\sqrt{q}+1} + z^{\sqrt{q}+1} = 0 
$
is blocking \cite{AGY23}*{Example 1.5}. More generally, one can construct a family of Fermat-type curves which are Frobenius nonclassical and blocking \cite{AGY23}*{Section 4}.
\end{rem}

\section*{Acknowledgements}
The second author is supported by an NSERC Discovery grant.

\bibliographystyle{alpha}
\bibliography{biblio.bib}

\end{document}